\def \bZ {{ \mathbb {Z}}}
\def \pO{{\mathcal{O}}}
\begin{document}

\markboth{O. Kharlampovich, A. Mohajeri}
{Approximation of Geodesics in  Metabelian Groups}

%
\catchline{}{}{}{}{}
%

\title{APPROXIMATION OF GEODESICS IN METABELIAN GROUPS }

\author{OLGA KHARLAMPOVICH\footnote{
Department of Mathematics and Statistics , McGill University, Burnside Hall, Room 1005,
805 Sherbrooke Street West, Montreal, Quebec,  H3A 2K6, Canada} }

\address{Department of Mathematics and Statistics , McGill University, Burnside Hall, Room 1005,
805 Sherbrooke Street West\\
Montreal, Quebec,  H3A 2K6, Canada\\ 
\email{olga@math.mcgill.ca} }

\author{ATEFEH MOHAJERI MOGHADDAM\footnote{
Department of Mathematics and Statistics , McGill University, Burnside Hall, Room 1005,
805 Sherbrooke Street West, Montreal, Quebec,  H3A 2K6, Canada}}

\address{Department of Mathematics and Statistics , McGill University, Burnside Hall, Room 1005,
805 Sherbrooke Street West\\
Montreal, Quebec,  H3A 2K6, Canada\\ 
\email{mohajeri@math.mcgill.ca } }

\maketitle

\begin{history}
\received{(Day Month Year)}
\revised{(Day Month Year)}
\comby{[editor]}
\end{history}

\begin{abstract}
It is known that the bounded Geodesic Length Problem in free metabelian groups is NP-complete \cite{myasnikov} (in particular, the Geodesic Problem is $NP$-hard). We construct a  2-approximation polynomial time deterministic algorithm for the Geodesic Problem.  We  show that
 the Geodesic Problem in the restricted wreath product of a finitely generated non-trivial group with a finitely generated abelian group
containing  $\bZ^2$ is $NP$-hard and  there exists a Polynomial Time Approximation Scheme for this problem.
 We also show that  the Geodesic Problem in the restricted wreath product of two finitely generated non-trivial abelian groups
 is $NP$-hard if and only if the second abelian group contains $\bZ^2$.
\end{abstract}

\keywords{Keyword1; keyword2; keyword3.}

 \maketitle

 \section{Introduction}

Studying  algorithmic problems in group theory could be traced back to Max Dehn.  In 1911, Dehn highlighted the $word$ $problem$ alongside two other problems: the Conjugacy Problem and the Group Isomorphism Problem. Since then, studying the algorithmic aspects of different problems has been an active field in combinatorial and geometric group theory. Algorithmic results on solvable groups were for a long time the real jems of combinatorial group theory, revealing interesting relations with computational commutative algebra and number theory. Researchers started looking again at the general classes of solvable groups, but this time from asymptotic, geometric, and computational perspectives.

In a recent work  Myasnikov $et$ $al$ \cite{myasnikov} consider the following three algorithmic problems related to Geodesic Problem in free solvable groups .

{\bf { \em The Geodesic Problem (GP) :}}  Let $X=\{ x_1, \dotsc,  x_m \}$ be a set of generators for a finitely generated group $G$ and $F(X)$ be the free group generated by $X$. Let $\mu : F(x) \rightarrow G $ be the canonical epimorphism. For a word $w \in F(X)$ represented in the alphabet $X^{\pm1}$ we denote by $|w|$  the length of $w$
  (number of generators used in a reduced presentation of $w$).  We define the Geodesic Length $l_X(g)$ of $g \in G$   with respect to the set of generators $X$ as follows

$$l_X(g):=min\{ |w|  \mid w \in F(X), \mu(w)=g \}$$

A word $w \in F(x)$ is called a {\it geodesic word} if $l_X(\mu(w))= |w|$.  The Geodesic Problem relative to $X$ is the following problem: given a word $g \in G$ find a {\it geodesic word} $w \in F(X)$  such that $g=\mu (w)$ in $G$.

{\bf{\em The Geodesic Length Problem (GLP) }}Let $G$ be a finitely generated group and $X$ be a set of generators of $G$. Given a word $w \in F(X)$ find $l_X(\mu(w))$.

{\bf{\em Bounded Geodesic Length Problem (BGLP) }}   Let $G$ be  a finitely generated group  with a set of generators $X$. Given a word $w \in F(X)$ and $k \in \mathbb{N}$ determine if $l_X(\mu(w)) \leq k$.

The hardness of $BGLP$, $GLP$ and $GP$ in a given group $G$ is discussed in \cite{myasnikov}. It is shown that each problem in the list is polynomial time {\it Turing reducible} to the next one. In the same paper $NP$-completeness of  $BGLP$ in free matabelian groups of arbitrary rank is proved.

We need to remind  some definitions from Complexity Theory.

{\bf{\em Polynomial-time Reducibility}} Problem $P$ is  Polynomial-time reducible to problem $Q$ if whenever $P$ has a solution,  such a solution can be obtained from a solution of $Q$ in polynomial time.

{\bf{\em Class NP}} Class $NP$ is the class of all decision problems for which the ''yes"-instances are recognizable in polynomial time by a non-deterministic Turing machine.

{\bf{\em NP-complete}} A problem $P$ is $NP$-complete if it is in class $NP$ and any other problem in class $NP$ is reducible to $P$ in polynomial-time. In other words, $P$ is at least as hard as any other problem in the class $NP$.

{\bf{\em  NP-hard  }} A problem $P$ is $NP$-hard if any $NP$-complete problem is polynomial time {\it Turing reducible} to $P$.

In a more recent paper \cite{elder}  Elder and Rechnitzer show that  for a finitely generated group $G$ these three problems are equivalent, i.e.,  they are reducible to each other in polynomial time.

  Motivated by some applications in group-based cryptography(see \cite{gp cryp}),  we construct a  2-approximation polynomial time deterministic algorithm for the Geodesic Problem in free metabelian groups. (Theorem 3.1). To the best of our knowledge, this is the first time that the idea of finding a good approximation to the optimal solution of  computationally hard problem is used in infinite group theory.  We  show that
 the Geodesic Problem in the restricted wreath product of a finitely generated group with a finitely generated abelian group
containing  $\bZ^2$ is $NP$-hard and  there exists a Polynomial Time Approximation Scheme for this problem. These approximation algorithms have an important role in solving the Conjugacy Problem in the so-called  $length$-$based$ $attack$ in  group-based cryptography( \cite{gp cryp}).
 We also show that  the Geodesic Problem in the restricted wreath product of two finitely generated  non-trivial abelian groups  is $NP$-hard if and only if the second group contains $\bZ^2$ (Theorems 2.3 and 2.4). This answers a question from \cite{myasnikov}.

We wish to thank A. Myasnikov and B. Shepherd for making many very helpful suggestions. We also thank L. Babai for showing that the TSP in virtually cyclic groups is in P, and S. Arora for giving a hint how to extend his polynomial time approximation schemes for the construction of Euclidean Traveling Salesman Tour to the construction of Traveling Salesman Path.

\section{Geodesic Problem In  Restricted Wreath Product  }

The results in this section are obtained by combining known results from group theory and computer science. Theorems 2.3, 2.4 and 2.6 are not hard to prove but the statements are interesting since they introduce the idea of Polynomial time Approximation Algorithm into infinite group theory. Furthermore,  they answer open questions and have never been formulated elsewhere.

Let us first agree about notations.  Let $A$ and $B$ be groups. Let

$$D:=\{ f:B \to A | |supp(f)| <  \infty \}.$$

 For $b \in B$ we  define the following action on $D$ :

\begin{align*}
&\hat{b}: D \to  D \\
& f(.) \mapsto f(b^{-1}.)
\end{align*}

We have an embedding $B \hookrightarrow Aut(D) $ by identifying $b$ with $\hat{b}$.

 The restricted wreath product $G=A \wr B $ is  the restricted semidirect  product $D \rtimes B$  consisting of all pairs $(f,b)$, $f \in D$ and  $b \in B$ with the operation

$$(f,b)(f',b'):=(fb(f'),bb').$$

For any $a \in  A$, let  $f_a \in D$  be the function that maps identity to $a$ and everything else to identity.  The map $a \mapsto (f_a,1 )$ gives the inclusion $A \hookrightarrow G$. We identify $a\in A$ with $(f_a,1)\in G.$ We also have the natural inclusion of $B$ in $G$ by $b \hookrightarrow (1,b)$.

We denote by  $f^b=b^{-1}fb$ the conjugation by elements of $B$ . Then

\begin{equation}
f_a^{b}=b^{-1}f_ab=(1,b^{-1})(f_a,1)(1,b)=(b^{-1}(f_a),b^{-1})(1,b)=(b^{-1}(f_a),1).
\end{equation}

Thus, $D \lhd G$. We also notice that  $b^{-1}(f_a)$ is the function that maps $b^{-1}$ to $a$ and everything else to $1$.

Now let $A$ be a finitely generated  group and $B$ be a finitely generated abelian group.
Let $S_A=\{ a_1, \dotsc,  a_t \}$ and $S_B=\{ b_1, \dotsc,  b_r  \}$ be the sets of generators of $A$ and $B$ respectively.  Then  $S_G=S_A  \cup  S_B$ generates $G$(see \cite{parry } ).

Let $F=F(S_G)$ and  $g \in F $ . $g$ can be expressed as follows
\begin{equation}
g=b_1^{n_{11}}  \dotsm b_r^{n_{1r}}c_1b_1^{n_{21}}  \dotsm b_r^{n_{2r}} c_2 \dotsm b_1^{n_{k1}}  \dotsm b_r^{n_{kr}} c_k b_1^{n'_{1}}  \dotsm b_r^{n'_{r}},
\end{equation}
where $c_1,\dotsc , c_k\in A$.

We note that $b_1^{n_{11}}  \dotsm b_r^{n_{1r}}$ and $b_1^{n'_{1}}  \dotsm b_r^{n'_{r}}$ might be identity.

Let  $h_i=b_1^{n_{i1}}  \dotsm b_r^{n_{ir}}$ for $1 \leq i \leq k$ and $h'=b_1^{n'_{1}} \dotsm b_r^{n'_{r}}$.
 Then, $g=h_1c_1\dotsm h_kc_k h'$. We can rewrite $g$ as follows (using the fact that $B$ is abelian)

\begin{align}
 \nonumber g=& (h_1c_1h_1^{-1})(h_1h_2c_2h_2^{-1}h_1^{-1})(h_1h_2h_3c_3h_3^{-1}h_2^{-1}h_1^{-1})  \\
  \nonumber    &\dotsm  (h_1h_2 \dotsm h_k c_k  h_k^{-1} \dotsm  h_2^{-1}h_1^{-1}) h_1 \dotsm h_k h'     \\
                         & = c_1^{h_1^{-1}} c_2^{h_1^{-1}h_2^{-1}} \dotsm c_k^{h_1^{-1} \dotsm h_k^{-1}} h' h_1 \dotsm h_k  .
\end{align}

Let  $w_i=h_1^{-1} \dotsm  h_i^{-1}$ for $1 \leq i \leq k$ and $b=h' h_1 \dotsm h_k$. Then, $g=c_1^{w_1} \dotsm c_k^{w_k}b$. Since $c_i^{w_i} \in A$ is the function that maps $w_i^{-1}$ to $c_i$ and any other element of $B$ to the  identity, if $w_i \neq w_j$, $c_i^{w_i}c_j^{w_j}=c_j^{w_j}c_i^{w_i}$.  On the other hand, we can assume that $w_i \neq w_j$ for $i\neq j$. To see this suppose that there exist $w_i$ and $w_j$ such that $w_i=w_j$ and $i < j$. Also assume that $j$ is the smallest index satisfying this property. Then, we can flip $c_i^{w_i}$  over $c_{i+1}^{w_{i+1}}, \dotsc ,c_{j-1}^{w_{j-1}}$ :

\begin{align}
\nonumber  g &= c_1^{w_1} \dotsm c_i^{w_i}c_{i+1}^{w_{i+1}} \dotsm c_j^{w_j} \dotsm c_k^{w_k}b\\
\nonumber     &=  c_1^{w_1} \dotsm a_{i+1}^{w_{i+1}} c_i^{w_i}\dotsm c_j^{w_j} \dotsm c_k^{w_k}b\\
\nonumber     & \vdots \\
\nonumber     &=  c_1^{w_1} \dotsm c_{i+1}^{w_{i+1}}\dotsm  c_i^{w_i} c_j^{w_j} \dotsm c_k^{w_k}b \\
                         &=  c_1^{w_1} \dotsm c_{i+1}^{w_{i+1}}\dotsm  (c_ic_j)^{w_j} \dotsm c_k^{w_k}b.
\end{align}

since we are looking for the  minimal number of the generators needed to present $g$ and  $|c_ic_j|_{A} \leq |c_i|_A+ |c_j|_A$, we don't lose anything in terms of the minimality  of $|c_i|_A$. Therefore, we can assume that $w_i \neq w_j$, for all $ i \neq j$.

Now, we consider the Geodesic Problem for $g$ with respect to the set of generators $S_G$.
 Suppose that we know how to solve the Geodesic Problem in $A$ with respect to $S_A$, i.e, we know the Geodesic Length of $c_i$ with respect to $S_A$ denoted by $|c_i|_A$.

For $g=b_1^{n_{11}}  \dotsm b_r^{n_{1r}}c_1b_1^{n_{21}}  \dotsm b_r^{n_{2r}}c_2  \dotsm b_1^{n_{k1}}  \dotsm b_r^{n_{kr}}c_kb_1^{n'_{1}}  \dotsm b_r^{n'_{r}}$ we denote by $l$ the following sum:

\begin{equation}
l=\sum_{i=1}^{r}n_{1i}+ \dotsb  +\sum_{i=1}^{r}n_{ki}+\sum_{i=1}^{r}n'_{i} + \sum_{i=1}^{k}|c_{i}|_A.
\end{equation}

We want to relate this sum to a path in the Cayley graph of $B$. For the moment let assume  that $B$ is free and consider $C=\bZ^r$ viewed as the Cayley graph of $B$ with respect  to the set of  generators $S_B$.  Let  $v_0$ be the vertex of $C$ corresponding  to identity. Let $x=b_1^{x_1}  \dotsm b_r^{x_r}$ and $y=b_1^{y_1}  \dotsm b_r^{y_r}$ be two vertices in $C$. The  Manhattan distance between $x$ and $y$ is defined as

\begin{equation}
d(x,y)=\sum_{i=1}^{r}\arrowvert x_i-y_i \arrowvert .
\end{equation}

In other words, $d(. ,.)$ is the restriction of the distance induced by $\mathcal{L}_1$-norm on $\mathbb {R}^r$ to $\bZ^r$ .

\begin{definition}
 Let $S=\{w_1, \dotsc , w_m , b\}$ be a set of elements of $B$ such that $w_i \neq w_j$ for $i\neq j$.  We say  $l$ is a walk on $S$ with end point $b$ if $l$ is a path of the form  $v_{0}, w_{i_1}, \dotsc,  w_{i_m},b,$ where $w_{i_1}, \dotsc,  w_{i_m}$ is a permutation of $w_1, \dotsc , w_m$.  We call such $l$ minimum if it has the minimum length with respect to $d(.,.)$ among all such walks.
\end{definition}

\begin{lemma}  
\cite{parry }  \label{geo word} 
Let $g=a_{i_1}^{w_1} \dotsm a_{i_k}^{w_k} b \in F$ where $a_{i1},\dotsc , a_{i_k}\in A$,  $w_1, \dotsc w_k \in B$ and $w_i$ are all different. Any {\it geodesic word} $g'$ such that $g'=\mu(g)$ corresponds to a minimum walk in $C$ on $S=\{w_1, \dotsc, w_m , b\}$ with end point $b$.
\end{lemma}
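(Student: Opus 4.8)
The plan is to pass to the ``lamplighter'' picture of $G=A\wr B=D\rtimes B$, in which an element $(f,b)$ is encoded by the finite support of $f$ (a set of lamp positions in $B$), the values of $f$ there, and a terminal cursor position $b$; throughout, as in the surrounding discussion, $B$ is taken free, so that $C=\bZ^r$ and $d$ is its word metric relative to $S_B$. Under this encoding $\mu(g)$ corresponds to the configuration placing $a_{i_j}$ at $w_j$ for each $j$ (this uses that the $w_j$ are pairwise distinct), together with terminal position $b$. The key observation is: reading any word $g'\in F(S_G)$ with $\mu(g')=\mu(g)$ from left to right and grouping its letters into maximal $S_B^{\pm1}$-blocks $v_1,v_2,\dots$ alternating with maximal $S_A^{\pm1}$-blocks $u_1,u_2,\dots$, one has $|g'|=\sum_i|v_i|+\sum_i|u_i|$; multiplying the $v_i$ out moves the cursor along a walk $v_0=p_0,p_1,\dots,p_N$ in $C$, where $p_j$ is the image in $B$ of $v_1\cdots v_j$, while each $u_i$ deposits $\mu(u_i)\in A$ at the current position $p_i$.

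From this, $\mu(g')=\mu(g)$ forces $p_N=b$, forces the ordered product of the deposits made at each $w_j$ to equal $a_{i_j}$, and forces no net deposit elsewhere; so the walk $p_0,\dots,p_N$ runs from $v_0$ to $b$ and hits every $w_j$. I would then prove $|g'|\ge L+\sum_j|a_{i_j}|_A$, where $L$ is the minimum walk length on $S=\{w_1,\dots,w_k,b\}$ with endpoint $b$: indeed $\sum_i|v_i|\ge\sum_j d(p_{j-1},p_j)$ is the length of that walk, and deleting from it every vertex outside $S$ and all but the first visit to each $w_j$ yields, by the triangle inequality for $d$, a walk of the form $v_0,w_{i_1},\dots,w_{i_k},b$ of no greater length, so $\sum_i|v_i|\ge L$; moreover $\sum_i|u_i|\ge\sum_i|\mu(u_i)|_A\ge\sum_j|a_{i_j}|_A$ since each $a_{i_j}$ is a product of certain $\mu(u_i)$. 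Conversely, given a minimum walk one builds a word of length exactly $L+\sum_j|a_{i_j}|_A$ by stringing together $S_B$-geodesic words along its successive edges and splicing an $S_A$-geodesic word for $a_{i_j}$ at the visit to $w_j$; hence $l_{S_G}(\mu(g))=L+\sum_j|a_{i_j}|_A$. If $g'$ is geodesic, all the inequalities above are equalities: every $v_j$ is a $B$-geodesic, no $S_A$-letters are wasted, and the walk $p_0,\dots,p_N$ has length $L$; collapsing the straight $B$-segments (and, if necessary, shortcutting repeated visits to a lamp via the triangle inequality) then exhibits a minimum walk on $S$ with endpoint $b$ corresponding to $g'$, which is the assertion.

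I expect the real work to be the combinatorial reduction inside the lower bound, rather than the translation into the wreath product: one must check carefully that detours through vertices outside $\{w_1,\dots,w_k,b\}$ and repeated visits to lamps never shorten the walk, so that the infimum of $|g'|$ over all representative words genuinely collapses to the infimum over the ``clean'' walks of the form used in the Definition; and, dually, that the converse construction reproduces $\mu(g)$ exactly — which, since $A$ need not be abelian, requires depositing all of $a_{i_j}$ at a single visit to $w_j$ and in the right order.
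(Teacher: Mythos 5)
The paper does not actually prove this lemma --- it is quoted from Parry \cite{parry } without proof, and the only in-paper groundwork is the normal-form computation in equations (2)--(4) that rewrites an arbitrary word as $c_1^{w_1}\dotsm c_k^{w_k}b$ with distinct $w_i$. Your argument supplies a correct, self-contained proof along exactly those lines: the block decomposition into alternating $S_B$- and $S_A$-segments, the lower bound $|g'|\ge L+\sum_j|a_{i_j}|_A$ via the triangle inequality for the Manhattan metric (shortcutting detours and repeated lamp visits), and the matching upper bound by concatenating $S_B$-geodesics along a minimum walk with $S_A$-geodesics for the $a_{i_j}$. The two points you flag as ``the real work'' are indeed the only delicate steps, and you handle them adequately: deposits at a fixed position multiply in visit order inside a single copy of $A$, so sub-additivity of $|\cdot|_A$ gives the second inequality even when $A$ is non-abelian. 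One small convention mismatch worth noting: with the paper's definition $c^{w}=w^{-1}cw$, the lamp carrying $c_i$ sits at $w_i^{-1}$, not $w_i$, so the walk is really on $\{w_1^{-1},\dotsc,w_k^{-1}\}$; since inversion is an isometry of $C$ fixing $v_0$ this changes nothing substantive, but you should fix one convention and state it.
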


\begin{corollary}\label{reduction}
 If we know how to solve Geodesic Problem in $A$, solving Geodesic Problem in $G=A \wr B$ is equivalent to finding a minimum walk on a finite subgraph of  the Cayley graph of $B$.
\end{corollary}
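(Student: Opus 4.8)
The plan is to deduce the statement directly from Lemma \ref{geo word} and the normal form established in the discussion preceding it, and then to check that a minimum walk never needs to leave a finite, effectively bounded piece of the Cayley graph of $B$.

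First I would normalize the input: given $g\in F$, rewrite $\mu(g)$ as $c_1^{w_1}\dotsm c_k^{w_k}b$ with the $w_i$ pairwise distinct, exactly as in the discussion preceding Lemma \ref{geo word}. This rewriting uses only multiplication in $A$ (to merge a pair $c_i^{w},c_j^{w}$ into $(c_ic_j)^{w}$) and equality testing in the finitely generated abelian group $B$; both are available once we can solve the Geodesic Problem in $A$, and, as already observed, the rewriting does not increase $\sum_i|c_i|_A$. Hence it determines, once and for all, the finite set $S=\{w_1,\dotsc,w_k,b\}\subseteq B$ together with the nonnegative integer $c=\sum_{i=1}^{k}|c_i|_A$, the latter obtained by applying the Geodesic Problem solver in $A$ to each $c_i$.

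Next I would set up the two-way translation. By Lemma \ref{geo word}, every geodesic word representing $\mu(g)$ corresponds to a minimum walk in $C$ on $S$ with end point $b$, and, tracing that correspondence, the length of the word equals $c$ plus the $d(\cdot,\cdot)$-length of the walk. Conversely, from any minimum walk $v_0,w_{i_1},\dotsc,w_{i_k},b$ I would rebuild a geodesic word: spell its consecutive edges as letters $b_j^{\pm1}$ of $S_B$ and, each time the walk passes through $w_{i_j}$, splice in a geodesic word for $c_{i_j}$ over $S_A$; the resulting word over $S_G=S_A\cup S_B$ represents $\mu(g)$ and has length (walk length)$\,+\,c$, hence is geodesic since $c$ is already fixed. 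So, modulo the Geodesic Problem in $A$, producing a geodesic word for $g$ in $G$ and producing a minimum walk on $S$ with end point $b$ are interreducible; for the other direction of the equivalence I would, given pairwise distinct $w_1,\dotsc,w_m,b\in B$, feed in $g=a_1^{w_1}\dotsm a_1^{w_m}b$ (with $a_1$ a nontrivial generator of $A$), so that every $|c_i|_A=1$ and minimizing the geodesic length of $g$ is precisely minimizing the walk length.

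It remains to cut the Cayley graph of $B$ down to a finite subgraph. When $B$ is free, $C=\bZ^r$, and a minimum $\mathcal{L}_1$-walk through $v_0,w_1,\dotsc,w_k,b$ may be taken inside the coordinate box spanned by those vertices, whose size is polynomial in $|g|$; for a general finitely generated abelian $B=\bZ^r\times T$ with $T$ finite, one uses the analogous finite sub-box. I expect the genuine obstacle to be not any single translation above --- each is short --- but the bookkeeping needed to make the equivalence polynomial-time in both directions: confirming that the normal-form rewriting is faithful and length-nonincreasing (argued in the preceding discussion) and that some minimum walk fits inside a box of size polynomial in the input, so that the ``finite subgraph'' really is of manageable size.
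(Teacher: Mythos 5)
Your proof is correct and follows the same route the paper intends: the paper states this corollary as an immediate consequence of Lemma \ref{geo word} together with the normal form $c_1^{w_1}\dotsm c_k^{w_k}b$ developed just before it, and gives no further argument. You simply spell out the two directions of the translation (geodesic word $\leftrightarrow$ minimum walk plus $\sum_i|c_i|_A$, and the converse reduction via $g=a^{w_1}\dotsm a^{w_k}b$, which is exactly the construction the paper reuses in Theorem 2.3), so nothing essential differs.
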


 {\bf{\em The Traveling Salesman Path Problem in $\bZ^r$($TSPP$) \cite{grid} :}} 
 Given a set of points $\{ s, w_1, \dotsc, w_n, t\} \subset \bZ^r$ with two distinguished points $s,t$  and a metric on $\bZ^r$, find a minimum path of the form $s,w_{i_1}, \dotsc,  w_{i_n}, t$  where $w_{i_1}, \dotsc,  w_{i_n}$ is a permutation of $w_1, \dotsc , w_n$ .

 It has been shown that the Traveling Salesman Problem in $\bZ^r$ with Manhattan distance is $NP$-hard for $n\geq 2$ (see \cite{grid}) . From that, one can easily conclude the $NP$-hardness of $TSPP$ in $\bZ^r$.

\begin{theorem}
 Geodesic Problem in $G=A\wr B$ is $NP$-hard where $A$ is a finitely generated non-trivial group and $B$ is a finitely generated abelian group containing $\bZ^2$.
 \end{theorem}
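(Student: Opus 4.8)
The plan is to reduce the Traveling Salesman Path Problem ($TSPP$) in $\bZ^2$ with the Manhattan metric --- shown $NP$-hard just above --- to the Geodesic Problem in $G=A\wr B$, using Corollary~\ref{reduction} to identify a geodesic word for $\mu(g)$ with a minimum walk in the Cayley graph of $B$, and Lemma~\ref{geo word} to control its length.

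First I would fix the algebraic data. Since $A$ is non-trivial and $S_A$ generates it, some $a\in S_A$ is non-trivial, so $|a|_A=1$. Since $B$ is finitely generated abelian and contains $\bZ^2$, it has a free abelian direct summand of rank $2$, and choosing $S_B$ accordingly we may assume that two generators $b_1,b_2\in S_B$ generate a direct summand $H\cong\bZ^2$. The elementary observation is that the word metric $d_B$ of $B$ with respect to $S_B$, restricted to $H$, is exactly the Manhattan metric: a word over $S_B^{\pm1}$ representing $b_1^{x}b_2^{y}\in H$ must contain at least $|x|$ occurrences of $b_1^{\pm1}$ and $|y|$ of $b_2^{\pm1}$, because $b_1,b_2$ map onto a basis of $H$ while the remaining generators of $B$ map trivially into $H$ under the projection along the complement; and $|x|+|y|$ letters evidently suffice.

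Next, given a $TSPP$ instance in $\bZ^2$ with distinguished points $s,t$ and intermediate points $w_1,\dots,w_n$, I may assume these $n+2$ points pairwise distinct and, after an isometric translation, that $s$ is the origin $v_0$. Viewing $t,w_1,\dots,w_n$ as elements of $H\le B$ through $b_1,b_2$, I form
\[
g \;=\; a^{w_1}a^{w_2}\cdots a^{w_n}\,t \;\in\; F(S_G),
\]
a word of length polynomial in the input, since the grid coordinates occurring in the hardness construction of~\cite{grid} are polynomially bounded. All lamp values $c_i$ equal $a$, so $\sum_{i=1}^{n}|c_i|_A=n$, and by Lemma~\ref{geo word} every geodesic word $g'$ with $\mu(g')=\mu(g)$ corresponds to a minimum walk on $\{w_1,\dots,w_n,t\}$ with end point $t$; consequently
\[
l_{S_G}(\mu(g)) \;=\; n+\delta ,
\]
where $\delta$ is the $d_B$-length of a minimum walk $v_0,w_{i_1},\dots,w_{i_n},t$, the $w_{i_j}$ ranging over a permutation of the $w_j$. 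By the previous paragraph $\delta$ is exactly the optimum of the original $TSPP$ instance, and the visiting order of such a walk is an optimal traveling-salesman path. Hence a solver for the Geodesic Problem in $G$, applied to $g$, returns a $g'$ from which both the $TSPP$ optimum $|g'|-n$ and an optimal path are recovered in polynomial time; this is a polynomial-time Turing reduction from an $NP$-complete problem to the Geodesic Problem in $G$, which is therefore $NP$-hard.

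The step that needs care is precisely the matching between the word metric of $B$ and the Manhattan metric on the copy of $\bZ^2$ used: for a generating set $S_B$ not adapted to a $\bZ^2$-summand the restricted metric is only bi-Lipschitz to $\ell_1$, so one must either argue that $S_B$ may be taken adapted (as above) or invoke $NP$-hardness of $TSPP$ on $\bZ^2$ for the induced translation-invariant lattice metric (equivalently, rescale the instance so the metric defect is negligible against the threshold). Everything else --- the polynomial bound on $|g|$, the reduction to $s=v_0$, and the extraction of the tour from $g'$ via Lemma~\ref{geo word} --- is routine.
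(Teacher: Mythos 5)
Your proposal is correct and follows essentially the same route as the paper: reduce the $NP$-hard Traveling Salesman Path Problem in $\bZ^2$ with the Manhattan metric to the Geodesic Problem by forming $g=a^{w_1}\cdots a^{w_k}b$ for a fixed nontrivial $a\in S_A$ and invoking Lemma~\ref{geo word} to identify geodesic words with minimum walks. You are in fact more careful than the paper on two points it dismisses with ``without loss of generality $B$ is free'' --- namely, adapting $S_B$ to a $\bZ^2$ direct summand so the restricted word metric is exactly $\ell_1$, and checking the instance size stays polynomial --- but these are refinements of the same argument, not a different one.
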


\begin{proof}
Without loss of generality we can assume that $B$ is free. In order to show $NP$-hardness of the Geodesic Problem in $G$, we will reduce the problem of finding a minimum walk on a finite set of  $C=\bZ^r$, viewed as the Cayley graph of $B$, to the Geodesic Problem in $G$. Let $v_0, w_1, \dotsc, w_k,b$ be $k+2$ different vertices in $C$(we can always assume that $v_0$ is one the points). We want to find a minimum walk on $S=\{w_1, \dotsc, w_k,b\}$ with endpoint $b$.  Now consider the following Geodesic Problem in $G$. Fix $a \in S_A$ and Consider $g=a^{w_1}a^{w_2} \dotsm a^{w_k}b$ . Assume that we know how to solve the Geodesic Problem for $g$. Thus, we can find a { \it geodesic word} $g'$ representing $g$. By lemma \ref{geo word}  g corresponds to a minimum walk $v_0,w_{i_1},w_{i_2}, \dotsc, w_{i_k} b$ on $S$. Thus, if we know how to solve  Geodesic Problem in $G$, we will be able to solve the minimum walk problem in C in polynomial time which is a contradiction. This proves the $NP$-hardness of the Geodesic Problem in $G$ .
\end{proof}

\begin{remark}
The Bounded Geodesic Length Problem($BGLP$)  is the decision version of the Geodesic Problem. An immediate  consequence of the previous theorem is that  if $BGLP$ for $A$ is in class $NP$ then it is  $NP$-complete for $G$.
\end{remark}

\begin{theorem}
If Geodesic Problem in $A$ is polynomial and $B={\mathbb Z}\times {\mathbb Z}_{k_1}\times \dotsb \times {\mathbb Z}_{k_t}$, then Geodesic Problem in $G=A\wr B$ is polynomial.
 \end{theorem}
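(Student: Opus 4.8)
The plan is to combine the reduction of Corollary~\ref{reduction} with the polynomial solvability of the Traveling Salesman Path problem on finite subsets of the Cayley graph of a virtually cyclic group. Given $g\in F(S_G)$, I first compute in polynomial time its normal form $g=c_1^{w_1}\dotsm c_k^{w_k}b$ with $c_i\in A$, the $w_i\in B$ pairwise distinct, and $b\in B$, exactly as in the discussion preceding Lemma~\ref{geo word} (collecting $B$-letters and merging the $c$'s whose associated elements of $B$ coincide). Since the Geodesic Problem in $A$ is polynomial, I replace each $c_i$ by a geodesic word over $S_A$ and record $|c_i|_A$. By Lemma~\ref{geo word} a geodesic word for $g$ is obtained from a \emph{minimum walk} in $\mathrm{Cay}(B)$ on $\{w_1,\dotsc,w_k\}$ with endpoint $b$ by inserting the geodesic $A$-words at the vertices $w_i$, and the geodesic length of $g$ equals $\sum_i|c_i|_A$ plus the $d$-length of such a walk. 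So it remains to find, in polynomial time, a minimum walk (and its length) on a finite vertex set of $\mathrm{Cay}(B)$ — which is exactly $TSPP$ on that set for the word metric of $B$, with distinguished endpoints $v_0$ and $b$.

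Here $B=\bZ\times K$ with $K:=\bZ_{k_1}\times\dotsb\times\bZ_{k_t}$ a \emph{fixed} finite group of order $m=k_1\dotsm k_t$, so $B$ is virtually cyclic. Write a vertex of $\mathrm{Cay}(B)$ as $(z,\kappa)$, $z\in\bZ$, $\kappa\in K$; then $d\big((z,\kappa),(z',\kappa')\big)=|z-z'|+d_K(\kappa,\kappa')$ with $d_K$ bounded by the constant $\delta:=\sum_i\lfloor k_i/2\rfloor$, so $\mathrm{Cay}(B)$ is a strip of bounded width $m$. The input points fall into at most $k+2$ \emph{columns} $c_1<\dotsb<c_p$ in $\bZ$, column $j$ carrying a set $Q_j\subseteq K$ of $\kappa$-values to be visited, while each \emph{corridor} $(c_j,c_{j+1})\times K$ contains no required point; since the $c_j$ are given in binary, $c_p-c_1$ may be exponential in the input size, and this is the only real difficulty. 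By elementary exchange arguments I may assume an optimal walk (i) stays within $z$-range $[c_1,c_p]$; (ii) inside each corridor moves monotonically in $z$ with $\kappa$ constant, deferring $\kappa$-moves to the columns; and, crucially, (iii) crosses each corridor at most $M=M(m)$ times, a constant — equivalently its column trajectory has $O(m)$ direction reversals, since an excess pair of crossings can be removed by a reroute that saves at least one $z$-step at a cost of at most $\delta$ extra $\kappa$-steps, and only $O(m)$ passes over a corridor can be ``useful''.

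With (i)--(iii) in hand I solve the problem by a left-to-right dynamic program over the $p\le k+2$ columns: a state records the current column and the ``interface'' at the corridor just crossed, a sequence of at most $M$ pairs $(\mathrm{direction},\kappa)$, which is a set of bounded size. For a fixed interface, the walk's behaviour inside a column is a shortest path in $\mathrm{Cay}(K)$ through a prescribed subset of $Q_j$ — computable in $O(1)$ since $K$ is fixed — and whether the part of the walk in columns $1,\dotsc,j$ covers $Q_1,\dotsc,Q_j$ is then determined; transition costs are $(\text{corridor length})\times(\text{number of traversals})+(\text{column cost})$, so the large integers $c_{j+1}-c_j$ enter only through $O(1)$ products. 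Hence the state graph has $\mathrm{poly}(k)$ vertices and a shortest path in it gives the optimal length and an optimal walk in polynomial time; together with the polynomial Geodesic Problem in $A$ this proves the theorem. (Alternatively, (i)--(iii) merely re-derive Babai's observation that $TSP$, and hence $TSPP$, on a finite subset of a virtually cyclic group's Cayley graph is solvable in polynomial time.) I expect item (iii) — bounding the number of corridor crossings by a function of $|K|$ only — to be the main obstacle; the dynamic program and the handling of the exponentially long corridors are then routine.
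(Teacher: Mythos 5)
Your top-level strategy is the same as the paper's: by Corollary~\ref{reduction} (via Lemma~\ref{geo word} and the polynomial Geodesic Problem in $A$), everything reduces to solving the Traveling Salesman Path problem on a finite subset of the Cayley graph of the virtually cyclic group $B=\bZ\times K$, and the theorem follows once that is known to be polynomial. Where you differ is that the paper simply cites this last fact --- it invokes \cite{Rothe} for a TSP algorithm whose polynomial degree is $k_1\dotsm k_t$ and \cite{Babai} for a better one, and gives no argument --- whereas you attempt to prove it by a column/corridor dynamic program. Your write-up is therefore more self-contained, which has some value given that one of the paper's two citations is a private communication. The DP skeleton (columns indexed by the distinct $z$-coordinates, interface states of bounded size, corridor lengths entering only as multipliers so that binary-encoded coordinates cause no blow-up) is the right machinery and is essentially how the cited results go. The one step you have not actually established is your item (iii), and the exchange argument you give does not close it as stated: removing a pair of crossings of a corridor saves $2(c_{j+1}-c_j)$ in $z$-steps, which can be as small as $2$, while the reroute may cost up to $2\delta$ extra $\kappa$-steps, so the net change can be positive and you cannot conclude that an \emph{optimal} walk crosses each corridor boundedly often. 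The standard repair is a pigeonhole refinement: once the number of same-direction crossings of a fixed corridor exceeds a bound depending only on $|K|$, two of them enter and leave the corridor at the same pair of $\kappa$-values, and reversing the tour segment between them eliminates the pair at no extra $\kappa$-cost; this yields the constant $M(m)$ and is precisely the content of the result the paper quotes. With (iii) repaired in this way (or simply cited, as the paper does), your argument is complete.
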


\begin{proof} 
 A polynomial time algorithm to solve $TSP$ in the Cayley graph of $B={\mathbb Z}\times {\mathbb Z}_{k_1}\times \dotsb \times  {\mathbb Z}_{k_t}$ with the degree of the polynomial equal to $k_1\dotsm k_t$ can be constructed as in \cite{Rothe}.  The statement of the theorem follows now from Lemma 3.1.
There exists even better algorithm for TSP in $B$ with the degree of the polynomial independent of $k_1\dotsm k_t$ \cite{Babai}.
\end{proof}

Theorems 2.3 and 2.4 imply the following result that answers an open question from \cite{myasnikov}.

\begin{corollary} 
Let $G=A \wr B$ where $A,B$ are finitely generated non-trivial abelian groups. Then the Geodesic Problem is $NP$-hard in $G$ if and only if $B$  contains $\bZ^2$.
\end{corollary}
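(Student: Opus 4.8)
The plan is to prove the two implications separately, using the classification of finitely generated abelian groups to reduce each to one of the two theorems above. Write $B\cong\bZ^{n}\times T$ with $T$ finite, so that $B$ contains $\bZ^{2}$ exactly when $n\ge2$. For the ``if'' direction, observe that a finitely generated non-trivial abelian group is in particular a finitely generated non-trivial group; hence when $n\ge2$ Theorem~2.3 applies directly and shows that the Geodesic Problem in $G=A\wr B$ is $NP$-hard, and no further work is needed.

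For the ``only if'' direction I would argue by contraposition: suppose $B$ does not contain $\bZ^{2}$, so $n\le1$. To invoke Theorem~2.4 I first need that the Geodesic Problem in the finitely generated abelian group $A$ is solvable in polynomial time, which I would establish directly. Since $A$ is abelian, for an input word $u\in F(S_A)$ the geodesic length of $\mu(u)$ equals the minimum of $\sum_{i=1}^{t}|e_i|$ over all factorizations $\mu(u)=a_{1}^{e_{1}}\dotsm a_{t}^{e_{t}}$ in $A$, and, after fixing an isomorphism $A\cong\bZ^{d}\times\bZ_{m_1}\times\dotsb\times\bZ_{m_s}$, computing this minimum is an integer linear program in a fixed number of variables (the $e_i$, together with bounded auxiliary variables accounting for the torsion congruences and linearizing the objective $\sum_{i}|e_i|$); hence it is solvable in polynomial time by Lenstra's algorithm for integer programming in fixed dimension. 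Granting this, if $n=1$ then $B\cong\bZ\times\bZ_{k_1}\times\dotsb\times\bZ_{k_t}$, which is exactly the form required by Theorem~2.4, so the Geodesic Problem in $G$ is polynomial; and if $n=0$, i.e.\ $B$ is finite, then Theorem~2.4 does not literally apply, but by Corollary~\ref{reduction} the Geodesic Problem in $G$ reduces to the (now polynomial) Geodesic Problem in $A$ together with finding minimum walks on a finite subgraph of the Cayley graph of $B$, an instance involving at most $|B|$ points, solved by brute force in constant time (alternatively one may cite \cite{Babai}). In either case the Geodesic Problem in $G$ is polynomial, hence, under the assumption $P\neq NP$, not $NP$-hard; together with the ``if'' direction this gives the claimed equivalence.

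As for difficulty: the corollary is essentially a bookkeeping combination of Theorems~2.3 and~2.4, and the only genuinely substantive ingredient is the verification that the Geodesic Problem in a finitely generated abelian group lies in $P$, which is what is needed to satisfy the hypothesis of Theorem~2.4; the remaining point of care is that a finite $B$ falls just outside the literal statement of Theorem~2.4 and must be disposed of separately, though trivially. I would therefore flag the abelian-group verification as the ``hard part'', with the caveat that there is no serious obstacle here.
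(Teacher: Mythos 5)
Your proof is correct and follows essentially the same route as the paper, which simply asserts that the corollary is the combination of Theorems~2.3 and~2.4 (Theorem~2.3 giving the ``if'' direction since a non-trivial abelian $A$ is a non-trivial group, and Theorem~2.4 giving the contrapositive of ``only if''). The two details you supply --- that the Geodesic Problem in a finitely generated abelian group is polynomial (so the hypothesis of Theorem~2.4 is met), and the separate trivial treatment of finite $B$, which falls outside the literal statement of Theorem~2.4 --- are left implicit in the paper, and your handling of both is sound.
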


We proved that Geodesic Problem in $G=A \wr B$ is $NP$-hard if $B$ contains ${\mathbb Z}^2$. So trying to get an approximative solution would be natural as the next step. Indeed, for applications a solution that is close to an optimal one
very often is as good as the optimal solution.

{\bf{\em Approximation Algorithm}}.   Let denote by $OPT$ the optimal solution of an minimization problem $P$ . An algorithm $A$ approximates problem $P$ within a factor of $\rho \geq 1$ if  $f (A)/OP T  \leq \rho$, where $f (A)$ is the solution given by $A$ .

{\bf{\em  Polynomial-Time Approximation Scheme  }}.  A $PTAS$  or $Polynomial$-$Time$ $Approximation$ $Scheme$ for a problem is a family of polynomial-time algorithms such that for any given constant $c$, there is an algorithm in the family that approximates the problem within a factor of $(1 + 1/c)$. The running time of the algorithm might depend on $c$, but for each fixed c, it is polynomial in the size of the input.

We state the main result of this section in the next theorem.

\begin{theorem}
Assuming that  the Geodesic Problem in A is solvable in polynomial time, there exists a $PTAS$ for the Geodesic Problem in $G$.
\end{theorem}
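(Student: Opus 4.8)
By Corollary 2.2, since the Geodesic Problem in $A$ is polynomial-time solvable, the Geodesic Problem in $G$ reduces to finding a minimum walk on a finite subset of the Cayley graph of $B$ with a prescribed endpoint. By the reduction in Lemma 2.1 and the preceding discussion, an input word $g$ expressed as in equation (3) yields the data $w_1,\dots,w_k, b \in B$ together with weights $|c_i|_A$, and the geodesic length is the sum $l$ in equation (5), which equals $\sum_{i=1}^r(|n_{1i}| + \dots) + \sum|n'_i| + \sum|c_i|_A$ — i.e.\ the total $d(\cdot,\cdot)$-length of a walk visiting $v_0, w_{i_1},\dots,w_{i_k}, b$ plus the fixed additive constant $\sum |c_i|_A$. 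So the optimization content is exactly an instance of $TSPP$ in $\mathbb{Z}^r$ with the Manhattan ($\mathcal{L}_1$) metric, from $s = v_0$ to $t = b$.

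The plan is therefore: (1) embed $\mathbb{Z}^r \subset \mathbb{R}^r$ and invoke Arora's PTAS for the Euclidean Traveling Salesman Problem in fixed dimension $r$, adapted to produce a Traveling Salesman \emph{Path} between two specified endpoints (the adaptation credited to Arora in the acknowledgements — one standard trick is to add a ``phantom'' vertex connected to $s$ and $t$ by zero-length edges, or to run the dynamic program over the quadtree with the two endpoints forced into the portal structure). (2) Observe that Arora's scheme works for the $\mathcal{L}_1$ metric as well as the Euclidean one, since the only geometric facts used are that distances are comparable up to a dimension-dependent constant and that the Patching Lemma holds; both are standard for $\mathcal{L}_1$ in fixed dimension. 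This gives, for each constant $c$, a polynomial-time algorithm returning a walk of $\mathcal{L}_1$-length at most $(1 + 1/c)\cdot \mathrm{OPT}_{\mathrm{walk}}$, where $\mathrm{OPT}_{\mathrm{walk}}$ is the minimum walk length. (3) Translate back: the returned walk gives, via Lemma 2.1, a word $g'$ with $g' = \mu(g)$ and $|g'| = (\text{walk length}) + \sum|c_i|_A$.

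The subtlety to check is that the PTAS guarantee survives the additive constant. We have $\mathrm{OPT}_G = \mathrm{OPT}_{\mathrm{walk}} + \sum_{i=1}^k |c_i|_A$ and the algorithm's output satisfies $|g'| = \mathrm{ALG}_{\mathrm{walk}} + \sum|c_i|_A \le (1+1/c)\,\mathrm{OPT}_{\mathrm{walk}} + \sum|c_i|_A \le (1+1/c)\big(\mathrm{OPT}_{\mathrm{walk}} + \sum|c_i|_A\big) = (1+1/c)\,\mathrm{OPT}_G$, so the multiplicative factor is preserved (an additive constant only helps). One also needs $B$ free only for the clean identification $C = \mathbb{Z}^r$; for general finitely generated abelian $B$ containing $\mathbb{Z}^2$ one works in the torsion-free quotient for the TSPP instance and handles the finite torsion part by brute force over the (constantly many) torsion cosets, exactly as in the proof of Theorem 2.4. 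The main obstacle I anticipate is purely expository: verifying that Arora's quadtree-and-portals machinery indeed extends to the path version with fixed endpoints and to the $\mathcal{L}_1$ norm — this is the content of the hint attributed to S.\ Arora, and the write-up should cite it rather than reconstruct it.
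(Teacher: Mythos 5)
Your proposal is correct and follows essentially the same route as the paper: reduce via Corollary \ref{reduction} to a Manhattan-metric Traveling Salesman Path instance in $\bZ^r$ and invoke Arora's PTAS for the path version with $L^p$ norms (the proposition preceding the theorem). Your explicit check that the additive term $\sum_i |c_i|_A$ preserves the $(1+1/c)$ factor is a detail the paper uses implicitly but does not spell out.
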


In order to prove the theorem  we need the following result.

 \begin{proposition}\cite{arora}
 There exists a $PTAS$ for TSP in $R^d$ with $Euclidean$ norm which generalizes to other $L^p$ norms for $p\geq 1$. A randomized version of the algorithm gives an approximation within a factor of $(1+1/c)$ of the optimal tour  in $\pO(n(logn)^{(\pO (\sqrt(dc) ))^{d-1}}$. If we derandomize the algorithm, we multiply the running time by $\pO(n^d) .$

  The same is true if instead of a TSP tour we consider a TSP  path \cite{arora1}.
\end{proposition}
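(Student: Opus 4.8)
The plan is to reproduce Arora's randomized dissection method and then derandomize it, so I organize the argument around a preprocessing step, a structure theorem, and a dynamic program. First I would reduce to a well-conditioned instance. Enclose the $n$ input points in a bounding cube, rescale so that every coordinate becomes an integer lying in a grid of side $L=\pO(n)$, and perturb each point to its nearest grid point. A routine estimate shows this rounding changes the length of the optimal tour by at most a multiplicative $(1+\pO(1/c))$ factor, so it suffices to produce a near-optimal tour on the rounded, polynomially bounded instance, and any solution can be lifted back to the original points at negligible cost.

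Second, I would build a hierarchical decomposition of the bounding cube: recursively split it into $2^d$ equal subcubes until each leaf has side $\pO(1)$, giving a $d$-dimensional quadtree of depth $\pO(\log L)=\pO(\log n)$. The essential randomization is to translate the entire grid by a vector $a$ chosen uniformly from $\{0,\dots,L-1\}^d$ (working on a doubled cube modulo $2L$), so that each dissection facet sits at a random scale of the hierarchy. On every facet of every cube I would place $m$ equally spaced \emph{portals}, where $m$ and a crossing bound $r$ are parameters tuned against $c$ and $d$, namely $m=\pO((\sqrt{d}\,c)^{d-1})$ and $r=\pO(\sqrt{d}\,c)$.

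Third — and this is the heart of the argument and the main obstacle — I would prove the Structure Theorem: with probability at least $1/2$ over the random shift, the dissection admits a tour that is \emph{portal-respecting} (it crosses facets only at portals) and \emph{$(m,r)$-light} (it crosses each facet at most $r$ times) whose length is within a factor $(1+1/c)$ of the optimum. The two ingredients are a \emph{patching lemma} and a random-shift expectation bound. The patching lemma says that a tour crossing a facet many times can be rerouted to cross it at most twice by adjoining segments lying within the facet, of total length at most a constant times the facet's side, independently of the number of crossings removed; it follows by connecting consecutive crossing points along the facet and invoking the triangle inequality. Summing the patching and portal-rounding costs over all facets and taking expectation over the shift, the amount charged to each portion of the tour is small because a facet's scale — and hence the side length entering both the patching bound and the portal spacing $1/m$ — is randomized across the $\pO(\log n)$ levels; with $m,r$ as above the total expected increase is at most $\frac{1}{c}\,\mathrm{OPT}$. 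The delicacy here lies in controlling the portal-rounding error and the patching error \emph{simultaneously} and verifying that placing facets at geometrically increasing scales keeps both small in expectation.

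Fourth, given that a near-optimal $(m,r)$-light portal-respecting tour exists, I would find the best such tour by dynamic programming up the quadtree. The state at a cube records which of its $m$ portals per facet are used as crossing points and how these are paired into the path segments the tour induces inside the cube; since each of the $2d$ facets is crossed at most $r$ times, the number of interface patterns depends only on $m$, $r$, $d$, and combining subcube solutions that agree on a common interface yields the recurrence, with total running time $\pO\!\left(n(\log n)^{(\pO(\sqrt{dc}))^{d-1}}\right)$. Trying $\pO(n^d)$ carefully chosen shifts (or averaging over them) derandomizes the algorithm at the stated multiplicative cost. The extension to a general $L^p$ norm needs only that the patching lemma's triangle-inequality estimates survive up to dimension-dependent constants, which they do since all $L^p$ norms are equivalent up to factors of $d^{1/p}$. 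Finally, the extension to a TSP \emph{path} with fixed endpoints $s,t$ is obtained by adding $s,t$ as forced endpoints of the dynamic program and letting the top cube carry an open rather than closed interface, after which the structure theorem and the dynamic program go through verbatim.
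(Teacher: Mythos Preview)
The paper does not prove this proposition at all: it is stated as a cited result from \cite{arora} (with the path version attributed to \cite{arora1}) and is used as a black box in the proof of Theorem~2.6. Your proposal is a faithful outline of Arora's actual argument --- preprocessing to a bounded grid, randomly shifted quadtree dissection, portals with the patching lemma, the Structure Theorem, and the dynamic program --- so it is correct in substance, but it goes well beyond what the paper itself supplies, which is simply a citation.
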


The theorem is a consequence of corollary \ref{reduction} and the previous proposition. Let $|S_B|=r$ and $g = a_1^{w_1} \dotsm a_k^{w_k} b$. We need to solve the $Traveling$ $Salesman$ $Problem$ walk in the subgraph of $C$ induced by $\{ v_{0},w_1, \dotsc, w_k, b \}$. we consider $C$ with Manhatan distance. The $ PTAS$ finds a word $g'=a_ {i_1} ^{w_{i_1}}a_ {i_2} ^{w_{i_2}} \dotsm a_ {i_k} ^{w_{i_k}}b$ such that $\sum_{j=1}^{k}d(w_{i_j},w_{i_{j-1}})+ \sum_{j=1}^{k} |a_{i_j}| \leq (1+1/c) l_{S_A, S_B}(g)$,where $l_{S_B ,S_A} (g)$ is the Geodesic Length of $g$ with respect to $S_B$ and $S_A$. If we use a randomized version of the algorithm the total running time is

$$\pO(k(logk)^{(\pO (\sqrt(rc)))^{r-1}} + (\text{time needed to find {\it geodesic words} representing } a_i)$$

\section{Geodesic Problem In Free Metabelian groups}

Let $F=F(X)$ be a free group of rank $r$.  Denote by $F'=[F,F]$ the derived subgroup of $F$ and by $F''=[F',F']$ the second derived subgroup of $F$.  Let $G=F/F'$ and $H=F/F''$. $G$ is a free abelian group of rank $r$ and  $H$ is a free metabelian group of rank $r$.

 In \cite{myasnikov} the authors show that $BGLP$ is $NP$-complete for free metabelian groups of arbitrary rank. Since  $BGLP$  is polytime reducible to the $GLP$, $GLP$ is  $NP$-hard in free metabelian groups. In this section we show that there is a $2$-approximation for the Geodesic Problem in $H$. First, we need to remind some definitions.

 {\bf{\em Flow}}.  Let $\gamma=(V,E)$ be a directed graph with two distinguished verices: a source $s$ and a sink $t$.  A flow on $\gamma$ is a function $f : E \rightarrow  \mathbb{R}$ such that

\begin{equation}
\sum_{e:o(e)=v}f(e)-\sum_{e:t(e)=v} f(e)=0  \hspace{1 cm}  \forall v \in V -\{s,t\}.
\end{equation}

The number $f^*(v):= \sum_{e:o(e)=v}f(e)-\sum_{e:t(e)=v} f(e)$ is called the net flow at $v$. The condition above is the same as $f^*(v)=0$ for $v \neq s,t$ and is usually referred to as { \it Kirchhoff} law. We call a flow $f$ circulation If $f^*(v)=0$  also for $s$ and $t$.  In this discussion we only consider $\mathbb{Z}$-flows, e.i., $f: E \rightarrow \bZ$

Let $p$ be a path in $\gamma$  from $v_1$ to $v_2$ and let $\pi_p : E \rightarrow \bZ$ be such that $\pi_p(e)$ is equal to the number of times that $p$ passes through $e$ counted $-1$ when $p$ takes $e$ backward. $\pi_p $ satisfies the flow condition for $v_1$ as source and $v_2$ as sink . Thus, any path in $\gamma$ induces a $\mathbb{Z}$-flow on $\gamma$.

 Let $w \in F$ and let $p_w$ be the path labeled by $w$ in $\gamma$ Cayley graph of $G$ . We denote by $\pi_w$  the flow induced by $p_w$ in $\gamma$.
 
 Let $\mu' : F(X) \rightarrow H$ be the canonical epimorphism. We identify $X$ with its image under $\mu'$($\mu'$ is one to one on $X$). An expression of $l_X(\mu'(w))$ for an arbitrary word $w \in F$ has been given in  \cite{myasnikov} . Before presenting our result we need to remind the definition of  two problems.

{\bf { \em Minimum Steiner Tree Problem:}}  Given a graph $\gamma=(V,E)$ and a subset $V_1 \subset V$, find a minimum subgraph(subtree) such that covers all vertices in $V_1$. 

{\bf { \em Minimum group Stiener Tree Problem:}} Given connected components $C_i$ of $\gamma$, find a minimum subgraph that makes the subgraph $\cup C_i$ connected.

Let  $w \in F$ and $\pi_w$ be the flow induced by $w$ in $\gamma$.  We define $supp(\pi_w)=\{e \in E | \pi(e)\neq 0\}$. A minimum Group  Steiner  Tree for $w$ is a minimal Group Steiner  Tree for the connected components of the subgraph induced by $supp(\pi_w)$ in $\gamma$.

\begin{proposition}
 \cite{myasnikov}  Let $\gamma$ be the cayley graph of $G$ with respect to the set of generators $X$. then for $w \in F$ we have

\begin{equation} \label{geo}
 l_X(\mu'(w))= \sum_{e \in  \text{supp}(\pi_w)} \pi_w(e)+2|E(Q)|,
\end{equation}

where  $\pi_w$  is the path induced by $w$ in $\gamma$ and $Q$ is a minimum group steiner tree for $w$ in $\gamma$.
\end{proposition}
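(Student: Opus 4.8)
The plan is to reduce the computation of $l_X(\mu'(w))$ to a shortest-walk problem on the lattice $\gamma$, the Cayley graph of $G$, using the classical description of the free metabelian group that comes from the Magnus embedding (see \cite{myasnikov}): two words $w,w'\in F$ satisfy $\mu'(w)=\mu'(w')$ if and only if the paths $p_w,p_{w'}$ have the same endpoint in $G$ and induce the same $\bZ$-flow, i.e.\ $\pi_w=\pi_{w'}$. Since a walk of length $L$ in $\gamma$ based at $1$ is literally a word of length $L$ over $X^{\pm1}$, this says that $l_X(\mu'(w))$ is the minimum number of edge-crossings of a walk based at $1$ whose induced flow equals $\pi_w$; such a walk automatically ends at the image $\bar w$ of $w$ in $G$. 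So everything reduces to showing that this minimum equals $\sum_{e\in\mathrm{supp}(\pi_w)}|\pi_w(e)|+2|E(Q)|$.

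For the lower bound, let $P$ be any walk based at $1$ inducing $\pi_w$ and let $n_P(e)$ count the crossings of $e$ by $P$ in either direction, so $|P|=\sum_e n_P(e)$. For $e\in\mathrm{supp}(\pi_w)$ one has $n_P(e)\ge|\pi_w(e)|$; for an edge $e$ crossed by $P$ with $e\notin\mathrm{supp}(\pi_w)$ the net number of crossings is $0$, hence $n_P(e)\ge 2$. The edges crossed by $P$ form a connected subgraph of $\gamma$ that contains $\mathrm{supp}(\pi_w)$ together with the vertices $1$ and $\bar w$; contracting each connected component $C_i$ of $\mathrm{supp}(\pi_w)$ and the vertices $1,\bar w$ shows that the non-support edges used by $P$ contain a subgraph making $\bigcup_i C_i\cup\{1,\bar w\}$ connected, hence at least $|E(Q)|$ edges. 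Summing the two estimates gives $|P|\ge\sum_{e\in\mathrm{supp}(\pi_w)}|\pi_w(e)|+2|E(Q)|$.

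For the matching upper bound I would build an explicit optimal walk. Form the directed multigraph $M$ on the vertex set of $\gamma$ as follows: for each $e\in\mathrm{supp}(\pi_w)$ add $|\pi_w(e)|$ arcs between the endpoints of $e$, all oriented along the direction of $\pi_w$; and for each edge of the minimum group Steiner tree $Q$ (for the family of connected components $C_i$ of $\mathrm{supp}(\pi_w)$, enlarged by the singleton terminals $\{1\}$ and $\{\bar w\}$) add two arcs, one in each direction. The underlying graph of $M$ is $\mathrm{supp}(\pi_w)\cup Q$, which by the defining property of $Q$ is connected and contains $1$ and $\bar w$. At every vertex $v$ the two anti-parallel $Q$-arcs cancel, so $\mathrm{indeg}_M(v)-\mathrm{outdeg}_M(v)$ equals the net flow of $\pi_w$ into $v$; by Kirchhoff's law this is $0$ for $v\neq 1,\bar w$ and $\pm1$ at $1$ and $\bar w$. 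Hence $M$ has a directed Eulerian path from $1$ to $\bar w$ (an Eulerian circuit when $\bar w=1$). Spelling out its arc labels gives a word $w'$ whose path in $\gamma$ induces exactly $\pi_w$ (the double $Q$-arcs contribute nothing to the net flow) and ends at $\bar w$; thus $\mu'(w')=\mu'(w)$, while $|w'|=\sum_{e\in\mathrm{supp}(\pi_w)}|\pi_w(e)|+2|E(Q)|$. Combined with the lower bound this yields the asserted expression for $l_X(\mu'(w))$.

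The part I expect to be the main obstacle is the upper-bound construction: it is what forces one to pass to a directed Eulerian argument — so that each support edge is traversed exactly $|\pi_w(e)|$ times and in exactly the directions dictated by $\pi_w$, and each Steiner-tree edge exactly twice, once in each direction — rather than a plain undirected Euler tour, which would only control parities; and it is where one must be precise that $Q$ must join the components $C_i$ together with the terminals $\{1\}$ and $\{\bar w\}$. A related delicate point, already present in the lower bound, is that the non-support edges used by an arbitrary competing walk genuinely contain a group Steiner tree for that same terminal family, which is why the contraction argument is needed in place of a naive edge count.
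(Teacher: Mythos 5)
Your argument is correct, and it is essentially the proof of this result in the cited source: the paper under review imports the proposition from \cite{myasnikov} without proof, and what you have written is a faithful reconstruction of the argument given there (the flow criterion for equality in $F/F''$ coming from the Magnus embedding, the lower bound by counting crossings of support and non-support edges plus the contraction/connectivity argument, and the upper bound via a directed Euler path in the multigraph built from $|\pi_w(e)|$ parallel arcs on support edges and doubled arcs on the Steiner edges). Two points where you silently, and correctly, repair the statement as reproduced in this paper are worth making explicit: the sum must be of $|\pi_w(e)|$, not $\pi_w(e)$, and the terminal family for the group Steiner tree must include the singletons $\{1\}$ and $\{\bar w\}$ in addition to the components $C_i$ of $\mathrm{supp}(\pi_w)$ --- otherwise the formula fails already for $w=x_1^{n}[x_1,x_2]x_1^{-n}$, whose support is a single unit square at distance $n$ from the origin, so the right-hand side would give $4$ while the geodesic length is $4+2n$. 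The only places deserving one more line are the degenerate cases ($\pi_w=0$, or $1$ not incident to any support or Steiner edge), but these are routine and your construction handles them once $1$ and $\bar w$ are among the terminals.
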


In the expression for $l_X(\mu'(w))$ we can evaluate $\sum_{e \in  \text{supp}(p_w)} \pi_w(e)$ in polynomial time. The time-consuming part is finding $Q$.  Unfortunately, there has been no $PTAS$ known so far for the $Group$ $Steiner$ $Tree $ problem. Nevertheless, in the same paper a similar statement to the $PTAS$ result for $Euclidean$ $Traveling$ $Salesman$ $Problem$ is proved for $Minimum$ $Steiner$ $Tree$ $Problem$.

 \begin{proposition}
 \cite{arora} There exists a $PTAS$ for the Euclidean Minimum Steiner Tree in $R^d$ which generalizes to other $L^p$ norms for $p\geq 1$. A randomized version of the algorithm gives an approximation within a factor of $(1+1/  c)$ of the optimal tree in  $\pO(n(logn)^{(\pO (\sqrt(dc) ))^{d-1}}$. If we derandomize the algorithm, we multiply the running time by $\pO(n^d) $.
\end{proposition}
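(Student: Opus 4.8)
The plan is to reconstruct Arora's randomized dissection framework, adapting his patching and dynamic-programming arguments from the Euclidean TSP to the Steiner tree. First I would normalize the instance: after a uniform scaling and rounding each terminal to the nearest point of a sufficiently fine integer grid, I may assume the $n$ terminals lie in a bounding box $[0,L]^d$ with $L=\pO(n)$ (measured in grid units), and that this perturbation changes the optimal Steiner length $\mathrm{OPT}$ by at most a factor $(1+\epsilon)$ for $\epsilon$ a fixed fraction of $1/c$. This reduces the problem to a grid of polynomial diameter at the cost of a constant slice of the approximation budget.

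Next I would impose a randomly shifted hierarchical dissection: pick a random shift $\mathbf{a}\in\{0,\dots,L-1\}^d$, translate the box, and recursively split each $d$-cube into $2^d$ congruent children down to unit cells, yielding a quadtree of depth $\pO(\log L)=\pO(\log n)$. On each face of each cube I place a regular grid of $m=(\pO(\sqrt{dc}))^{d-1}$ \emph{portals}. A candidate tree is \emph{portal-respecting} if every crossing of a dissection face occurs at a portal, and \emph{$r$-light} if it crosses each face at most $r=\pO(\sqrt{dc})$ times.

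The crux is the Structure Theorem: with probability at least $1/2$ over the random shift there is a portal-respecting, $r$-light Steiner tree of length at most $(1+1/c)\,\mathrm{OPT}$. This is where the main difficulty lies, and I would prove it via a Patching Lemma: whenever the tree crosses a given segment more than $r$ times, one breaks the crossings and reconnects the resulting pieces along the segment, adding length proportional to the segment's length but \emph{independent} of the number of crossings. Charging these patchings over the levels of the dissection and taking expectation over the random shift — so each grid line is cut at a given level with the right probability — bounds the total expected added length by $\pO(1/c)\cdot\mathrm{OPT}$, and rerouting the surviving crossings to the nearest portals adds another $\pO(1/c)\cdot\mathrm{OPT}$. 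The delicate point specific to \emph{trees} (rather than tours) is that patching must preserve connectivity of the terminal set: instead of splicing a cycle, the severed subtrees must be reattached through a short path along the cut, and one must verify this still produces a genuine tree within the claimed excess length.

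Finally I would solve the restricted problem exactly by bottom-up dynamic programming over the quadtree. The DP state for a cube records, for each of its $\pO(m)$ portals, whether and how often the tree uses it, together with the partition of the used portals into connected components of the tree inside the cube — this connectivity interface is the feature absent from the tour DP. Since an $r$-light solution uses $\pO(dr)$ portal crossings per cube, the number of states per cube is at most $m^{\pO(dr)}$ times a Bell-number factor for the partitions, which after substituting $m$ and $r$ is $(\log n)^{(\pO(\sqrt{dc}))^{d-1}}$; merging children costs polynomial in the state size, and the quadtree has $\pO(n\log n)$ cubes, giving the stated bound $\pO(n(\log n)^{(\pO(\sqrt{dc}))^{d-1}})$. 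Enumerating all $L^d=\pO(n^d)$ shifts and returning the best derandomizes the algorithm at the cost of a factor $\pO(n^d)$. The whole argument uses only that the norm is within constant factors of Euclidean at the relevant scales and that the patching bound holds with norm-dependent constants, so it transfers verbatim to every $L^p$ norm with $p\ge 1$.
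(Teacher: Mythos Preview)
The paper does not prove this proposition at all: it is stated purely as a citation of Arora's result, with no argument given. Your proposal is a faithful and essentially correct outline of Arora's randomized-dissection framework (perturbation to a grid, randomly shifted quadtree, portals and $r$-lightness, the Structure Theorem via a Patching Lemma plus expectation over shifts, bottom-up DP with connectivity interfaces for the Steiner case, and derandomization by enumerating shifts), so you have actually supplied more than the paper does.
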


\begin{theorem} 
There is a $2$-approximation algorithm for the Geodesic Problem in the free metabelian group of arbitrary rank $r$.
\end{theorem}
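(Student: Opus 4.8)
The plan is to turn the geodesic-length formula~(\ref{geo}) into an algorithm. Write $l_X(\mu'(w)) = N_w + 2|E(Q)|$, where $N_w$ is the first summand in~(\ref{geo}) -- a non-negative integer obtained by tracing the path $p_w$, hence computable in linear time -- and $Q$ is a minimum group Steiner tree for $w$ in the Cayley graph $\gamma$ of $G=\bZ^r$. All of the difficulty is therefore concentrated in the term $|E(Q)|$. Although the general Group Steiner Tree Problem admits no known $PTAS$, the instances arising here are of a very restricted type: one must reconnect the \emph{already connected} components $C_1,\dots,C_m$ of the subgraph induced by $\text{supp}(\pi_w)$, and for such instances the classical metric minimum-spanning-tree heuristic for Steiner trees is available and gives a factor-$2$ approximation.

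Concretely, I would proceed as follows. Compute $\pi_w$, its support, and the components $C_1,\dots,C_m$. Since $p_w$ has length at most $|w|$, every coordinate occurring in $\text{supp}(\pi_w)$ lies in $[-|w|,|w|]$, so $\text{supp}(\pi_w)$ sits inside a box $R\subseteq\bZ^r$ with at most $(2|w|+1)^r$ vertices -- polynomially many for a fixed rank $r$ -- and a rectilinear (Hanan-grid type) argument shows that some minimum connector $Q$ may be taken inside $R$. Contract each $C_i$ to a single vertex of $\gamma[R]$; this yields a finite graph $\gamma'$ in which the object sought is simply a minimum Steiner tree, with unit edge weights, on the $m$ terminals $\bar C_1,\dots,\bar C_m$, and $|E(Q)|$ equals the number of edges of that tree -- travel inside a component being free, which is exactly the ``group'' flavour of the problem. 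Now compute the (unit-weight) shortest-path metric $d'$ of $\gamma'$ between the terminals by breadth-first search from each $\bar C_i$, take a minimum spanning tree $T$ of the complete graph on $\{\bar C_1,\dots,\bar C_m\}$ weighted by $d'$, realise each edge of $T$ by a path in $\gamma'$, and delete edges that close a cycle; this produces a connector $\tilde Q$ with $|E(\tilde Q)| \le \ell(T) \le 2(1-1/m)\,|E(Q)| < 2|E(Q)|$, the middle inequality being the standard ``double an optimal Steiner tree, take an Euler tour, shortcut'' bound, which holds in any metric. Finally, output the word $g'$ that traverses the forced edges recorded by $\pi_w$ together with each edge of $\tilde Q$ twice (once forward, once back), via the same realisation that proves the inequality ``$\le$'' in~(\ref{geo}); then $\mu'(g')=\mu'(w)$ and $|g'| = N_w + 2|E(\tilde Q)|$.

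It remains to check the ratio, and the point is that adding the exactly computed non-negative number $N_w$ to a term approximated within a factor $2$ preserves that factor: since $N_w\ge 0$ gives $2|E(Q)|\le N_w+2|E(Q)|$,
\[
|g'| = N_w + 2|E(\tilde Q)| < N_w + 4|E(Q)| = \bigl(N_w+2|E(Q)|\bigr)+2|E(Q)| \le 2\bigl(N_w+2|E(Q)|\bigr) = 2\,l_X(\mu'(w)).
\]
As $|g'|\ge l_X(\mu'(w))$ trivially, $g'$ is a $2$-approximate geodesic, and all steps above run in time polynomial in $|w|$ (with the exponent depending on $r$).

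I expect the crux to be the reduction of the $|E(Q)|$-term to an honest Steiner-tree instance: one must verify that contracting the $C_i$ genuinely models ``a minimum subgraph making $\bigcup C_i$ connected'' with intra-component travel free, and that an optimal connector can be confined to the box $R$, so that the metric $d'$ is effectively and efficiently computable and the metric-oblivious factor-$2$ MST heuristic applies. It is also worth recording why Arora's $PTAS$ for Euclidean Steiner trees cannot be used directly: after contraction the terminals $\bar C_i$ are not points of $\bZ^r$ -- distances to a contracted terminal are ``one-sided'' and motion inside a component is free -- so the instance is not geometric and only the generic factor-$2$ bound survives, which is exactly the ratio claimed in the theorem.
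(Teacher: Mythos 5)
Your proof is correct, and it reaches the stated bound by a genuinely different route from the paper. The paper does not contract the components: it picks one representative point $y_i$ in each connected component $C_i$ of the subgraph induced by the support of $\pi_w$, runs Arora's $PTAS$ for the (geometric, $L^1$) Minimum Steiner Tree on the point set $\{y_i\}$ to obtain a tree $Q_1$, and then controls the damage done by forgetting the component structure via the observation that $Q^*\cup(\cup_i C_i)$ is a feasible Steiner tree for $\{y_i\}$, which yields $|Q_1|\le (1+1/c)\bigl(|Q^*|+\sum_i|C_i|\bigr)$; choosing $c$ large relative to $\sum_i|C_i|$, it bounds the relative error $(l'(w)-l(w))/l(w)$ by $2$. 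You instead treat the group Steiner term directly, contracting each $C_i$ to a terminal and applying the classical metric-MST factor-$2$ heuristic, and then absorb that factor into the whole length using only $N_w\ge 0$. What each buys: the paper's route stays inside point-terminal geometric Steiner instances, where a $PTAS$ is available, at the price of an additive $\sum_i|C_i|$ error term that pollutes the final ratio --- indeed, measured against the paper's own definition of a $\rho$-approximation ($f(A)/OPT\le\rho$), its concluding inequality $(l'(w)-l(w))/l(w)<2$ only certifies $l'(w)<3\,l(w)$, whereas your argument delivers the clean multiplicative bound $|g'|\le 2\,l_X(\mu'(w))$ that the theorem actually asserts. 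Your route is also more elementary (breadth-first search plus a minimum spanning tree instead of Arora's scheme), and your closing remark correctly identifies why the $PTAS$ cannot simply be applied after contraction; the only points needing the care you already flag --- confining everything to the box $R$ and checking that contraction exactly models ``make $\cup_i C_i$ connected'' with free intra-component travel --- are routine for the grid $\bZ^r$.
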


\begin{proof} 
Let $C_i$ be the connected components of the subgraph induced by $supp(\pi_w)$ in $\gamma$.  We choose an arbitrary point  $y_i$ in each  $C_i$.  Let $Q_1$ be the $(1+1/c)$-approximation of the Minimum Steiner Tree Problem for $\{ y_i \}$. So $ |Q_1|/(1+1/c)$ is the optimal value for this problem.   We denote by $Q^*$ the optimal solution for the Minimum Group Steiner Tree on $C_i$.  $Q^*  \bigcup (\cup C_i) $ is a connected subgraph of $\gamma$ containing $\{ y_i \}$(see Fig. \ref{fig} ). Thus

\begin{equation}
\frac{|Q_1|}{(1+1/c)}  \leq |Q^*  \bigcup (\cup C_i)| = |Q^*| + \sum |C_i|.
\end{equation}

So

\begin{equation}
|Q_1|-|Q^*| \leq 1/c |Q^*|+(1+1/c)\sum |C_i|.
\end{equation}

\begin{figure}[ph] 
\centerline{\psfig{file=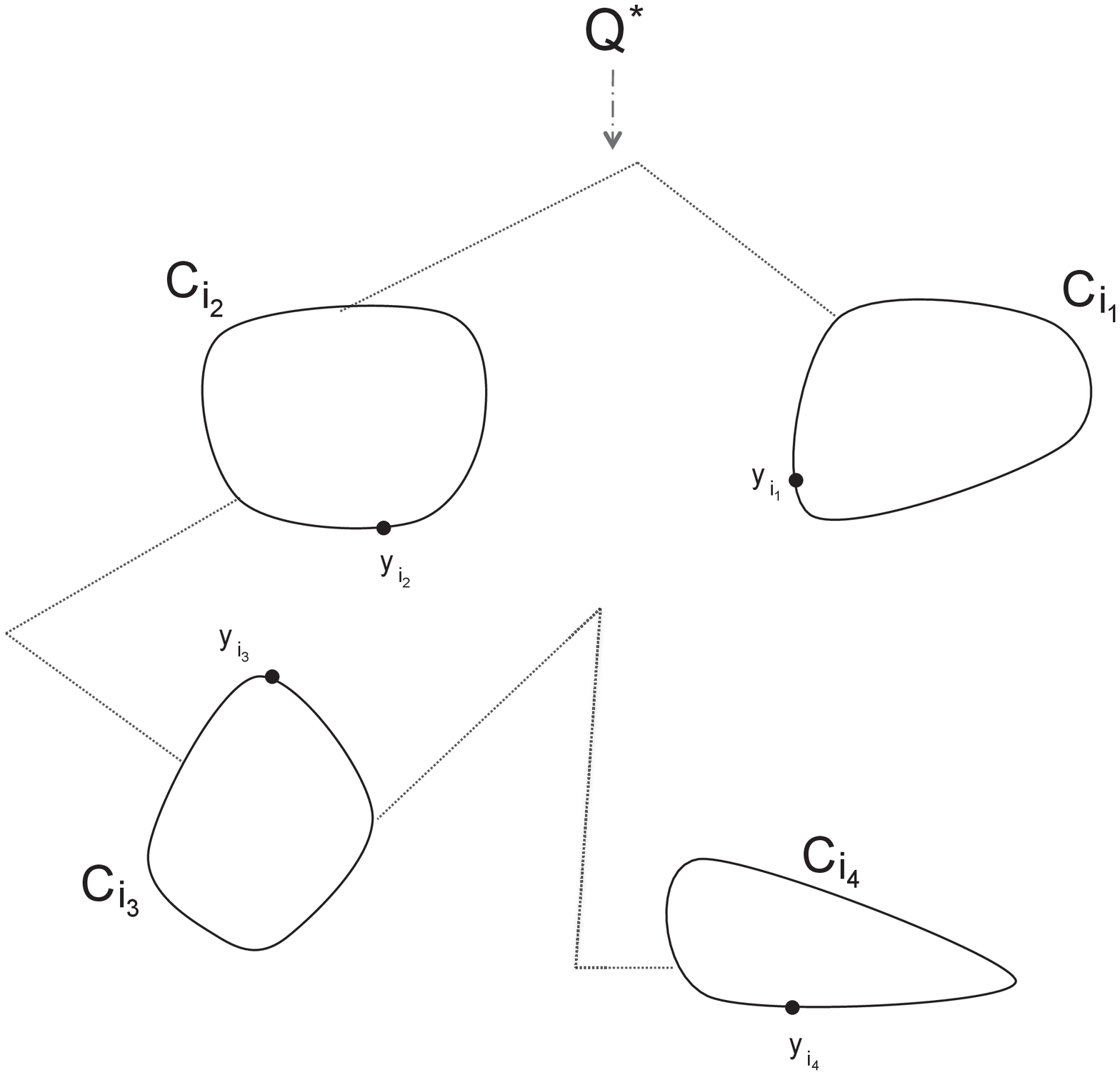,width=4.5in}} 
\vspace*{8pt}
\caption{\label{fig}}
\end{figure}

If we use $Q_1$ instead of $Q^*$ in formula \ref{geo}, we get an approximation $l'(w)$ for $l(w)$ such that

\begin{align}
   \nonumber  \frac{l'(w)-l(w)}{l(w)} & = \frac{(\sum_{e \in \text{ supp}(\pi_w) } \pi_w(e)+2|Q_1|)-(\sum_{e \in \text{ supp}(\pi_w) } \pi_w(e)+2|Q^*|)}{\sum_{e \in \text{ supp}(\pi_w) } \pi_w(e)+2|Q^*|)} \\
    \nonumber                                     &=\frac{2(|Q_1|-|Q^*|)}{\sum_{i } |C_i|+2|Q^*|)}\\
                                                             & \leq  \frac{2(1/c |Q^*|+(1+1/c)\sum |C_i|)}{\sum_{i } |C_i|+2|Q^*|)}.
\end{align}

If we choose $c$ such that $1/c < \frac{1}{\sum |C_i|)}$, then

\begin{align}
    \nonumber  1/c |Q^*|+ 1/c \sum |C_i| & < 1/c |Q^*|+1  \\
                                                                     &\leq 2 |Q^*|.
\end{align}

 The last inequality is satisfied since $ |Q^*| > 0$, otherwise there is nothing to prove. This implies

\begin{equation}
 \frac{1/c |Q^*|+(1+1/c)\sum |C_i|}{\sum_{i } |C_i|+2|Q^*|)} <1.
 \end{equation}
 
 So
 
\begin{align}
   \nonumber  \frac{l'(w)-l(w)}{l(w)} & \leq  \frac{2(1/c |Q^*|+(1+1/c)\sum |C_i|)}{\sum_{i } |C_i|+2|Q^*|)}\\
                                                             &  < 2.
\end{align}

which proves that $l'(w)$ is a $2$-approximation of $l(w)$.

\end{proof}

\end{document}